\documentclass[letterpaper, 10pt, conference]{ieeeconf}    
\IEEEoverridecommandlockouts                       
\usepackage[colorlinks=true,citecolor=blue,linkcolor=blue,urlcolor=blue]{hyperref}
\usepackage{url}           
\usepackage{booktabs}      
\usepackage{nicefrac}      
\usepackage{microtype}     
\usepackage{xcolor}       
\usepackage{tgtermes}
\usepackage{algorithm}
\usepackage{algorithmic}

\usepackage{amsfonts,amsmath,amssymb}
\usepackage{bm}
\usepackage{here}
\usepackage{color}
\usepackage{graphicx}
\usepackage{mathrsfs}
\usepackage[subrefformat=parens]{subcaption}
\usepackage{wrapfig}
\usepackage{tikz}
\usetikzlibrary{positioning}
\usepackage{tabularx}
\usepackage{cite}

\usepackage{amsthm}

\newcommand{\calN}{{\mathcal N}}    
    
\newcommand{\calP}{{\mathcal P}}

\newcommand{\calS}{{\mathcal S}}    
\newcommand{\calT}{{\mathcal T}}

\newcommand{\scrT}{{\mathscr T}}

\newcommand{\bbE}{{\mathbb E}}

\newcommand{\bbN}{{\mathbb N}}

\newcommand{\bbR}{{\mathbb R}}

\newcommand{\rmd}{{\rm d}}

\newcommand{\Ee}{{\rm e}}   %% Euler's 

\newcommand{\ft}{T}

\newcommand{\trace}{{\rm tr}}

\newcommand{\wtilde}[1]{\widetilde{#1}}

\newtheorem{proposition}{Proposition}

\newtheorem{lemma}{Lemma}
\newtheorem{theorem}{Theorem}

\newtheorem{remark}{Remark}

\title{\LARGE \bf
Sliced Wasserstein Steering between Gaussian Measures
}

\author{Kaito Ito$^{1}$ and Anqi Dong$^{2}$% <-this % stops a space
\thanks{*This work was supported in part by JSPS KAKENHI Grant Number JP24K17297, JST, Adopting Sustainable Partnerships for Innovative Research Ecosystem (ASPIRE), Grant Number JPMJAP2402, and Swedish Research Council Distinguished Professor Grant 2017-01078, and Knut and Alice Wallenberg Foundation Wallenberg Scholar Grant.}% <-this % stops a space
\thanks{$^{1}$Kaito Ito is with the Department of Information Physics and Computing, The University of Tokyo, Tokyo 113-8654, Japan {\tt \small kaito@g.ecc.u-tokyo.ac.jp}}%
\thanks{$^{2}$Anqi Dong is with the Department of Decision and Control Systems and Department of Mathematics, KTH Royal Institute of Technology, Stockholm 114~28, Sweden {\tt\small anqid@kth.se}}%
\thanks{\copyright 2026 IEEE. Personal use of this material is permitted. Permission from IEEE must be obtained for all other uses, in any current or future media, including reprinting/republishing this material for advertising or promotional purposes, creating new collective works, for resale or redistribution to servers or lists, or reuse of any copyrighted component of this work in other works.}
}

\begin{document}

\maketitle
\thispagestyle{empty}
\pagestyle{empty}

\begin{abstract}
Optimal transport with quadratic cost provides a geometric framework for steering an ensemble, modeled by a probability law, with minimal effort. Yet ambient-space formulations become unwieldy in high dimensions, and sensing or actuation in practice often reveals only linear views of the state --- camera silhouettes, LiDAR beams, tomographic slices. We develop a sliced feedback controller for distribution steering: the evolving law is projected onto one-dimensional directions on the sphere, the optimal one-dimensional velocity is synthesized in each projection, and these velocities are averaged to produce a feedback control in the ambient space. 
The construction reduces to the Benamou--Brenier problem in one dimension.
In addition, it is invariant under orthogonal transforms, nonexpansive under projections, and well posed on $\mathcal{P}_2(\mathbb{R}^n)$. Computation proceeds by sampling directions on the sphere and solving independent one-dimensional subproblems, yielding a scalable method aligned with partial observations.
In the Gaussian setting, we show that the developed sliced controller steers the law to the prescribed target. Furthermore, we derive an identity relating the energy consumption incurred by the controller to the sliced Wasserstein distance.

\end{abstract}

\begin{keywords}
    Sliced optimal transport, distribution steering, covariance steering, model predictive control
\end{keywords}

\section{Introduction}
Optimal transport theory provides a principled way to transform one probability distribution into another with minimal effort, endowing the probability space with the Wasserstein geometry. Originating in Monge’s formulation \cite{Monge1781memoire} and developed through Kantorovich’s relaxation \cite{Kantorovich1942translocation} and a rich geometric analysis, optimal transport now underpins applications across mathematics, economics, and machine learning \cite{Villani2003,Rachev1998mass,Peyre2019computational}. From a control viewpoint, it is natural to regard optimal transport as ensemble steering: rather than moving a single trajectory, one shapes an entire state distribution over a finite horizon while penalizing quadratic effort. The dynamic Benamou--Brenier formulation \cite{Benamou2000} makes this viewpoint explicit by treating the density as the state and the velocity field as the control, with mass conservation providing the dynamics and an action functional measuring effort. For linear dynamics, feasibility aligns with reachability, and the evolution admits structure at the level of moments, placing distribution control on same footing as classical optimal control \cite{Chen2017optimal,chen2018optimal,Chen2021optimal1}. A stochastic counterpart, the Schrödinger bridge \cite{chen2016relation}, offers an entropy-regularized interpolation and recasts quadratic-cost transport in the small-noise limit.

Sliced optimal transport addresses the computational burden that hinders high-dimensional applications in optimal transport. The idea is to replace a single high-dimensional coupling with one-dimensional problems obtained by projection. Concretely, one projects measures along many directions, as in the Radon transform, solves the induced one-dimensional transport or barycenter problems in closed form, and aggregates across directions to obtain sliced distances and barycenters \cite{bonneel2015sliced,paulin2020sliced,Deans2007radon,Peyre2019computational}. This project-then-average construction yields a sample-friendly objective with modest memory footprint and parallel computation, scaling with the number of samples and directions rather than with ambient dimension. Beyond scalability, it also matches how data are acquired in practice: camera silhouettes are line-of-sight projections, LiDAR returns ranges along rays, and CT scans collect line integrals at multiple orientations \cite{Haker2004optimal}. 
These observations motivate our contribution: a sliced optimal transport framework for distribution steering with partial or distributed observations. We thus propose the objective in the output space induced by linear sensing and actuation, and enforce transport along informative projections aggregated across directions. The construction avoids high-dimensional couplings and operates directly on samples, making it compatible with distributed storage and computation. In the full-information limit, it recovers the classical setting. With limited views, it provides a principled surrogate aligned with what sensors and networks actually reveal.

Herein, we develop a sliced optimal transport framework for ensemble steering that operates through one-dimensional projections. The evolving probability law is projected onto directions on the sphere, the optimal one-dimensional velocity is synthesized in each projection, and these velocities are averaged to produce a feedback control in the ambient space. In continuous time, this feedback decreases the sliced Wasserstein objective monotonically with a gain that scales with the inverse of the remaining horizon. For Gaussian marginals, the controller is affine and yields closed equations for mean and covariance. The flow reaches a target distribution at the terminal time, with expected control energy equal to one-half of the squared sliced distance. A discrete-time, direction-by-direction implementation of the sliced distribution steering, called the iterative sliced controller, uses a single one-dimensional map at each time step. It agrees numerically with the ideal law obtained by using infinitely many slicing directions, while differing qualitatively from the minimum-energy displacement interpolation. Lastly, we note that, while previous studies~\cite{Vauthier2025,Liutkus2019,Cozzi2025} have investigated gradient flows for the sliced optimal transport describing asymptotic evolution, the present work is the first to analyze finite-time sliced distribution steering.

The organization of the paper is as follows. Preliminaries on Wasserstein distance with its sliced variant and notations appear in Section~\ref{sec:prelim}. Section~\ref{sec:iterative} introduces the iterative projected controller in discrete time and its MPC-style application of the first control. Section~\ref{sec:single_integrator} establishes the time derivative of the squared sliced distance $SW_2^2$ (\textbf{Proposition}~\ref{prop:time_derivative}) and develops the continuous-time sliced controller for the single integrator so that $SW_2$ decreases. Then, we derive its affine form for Gaussian marginals (\textbf{Proposition}~\ref{prop:linearlity_Gaussian}), and prove convergence and an energy identity (\textbf{Theorem}~\ref{thm:convergence} and~\ref{thm:energy}). Section~\ref{sec:num} presents numerical comparisons between the iterative sliced controller, the continuous-time sliced feedback, and the minimum-energy controller. Section~\ref{sec:conclusion} concludes and outlines extensions to general linear plants and sensing-limited settings.

\section{Preliminaries}\label{sec:prelim}

Let $\calP_2(\bbR^n)$ denote the set of Borel probability measures on $\bbR^n$ with finite second moment, and $C_c^\infty(S)$ denote the space of real-valued, compactly supported, smooth functions on a set $S$. The unit sphere in $\bbR^n$ is $\calS^{n-1}$. When clear from context, we use the same symbol for a probability measure and its density with a slight abuse of notation. Furthermore, we write $\partial_t f$ for $\partial f/\partial t$, and $\nabla_x$ for the gradient with respect to $x$. The Gaussian law with mean $\mu$ and covariance $\Sigma$ is $\calN(\mu,\Sigma)$, with corresponding density $\calN(\,\cdot\,\mid \mu,\Sigma)$.

For given marginals $\mu,\nu\in\calP_2(\bbR^n)$, the quadratic Wasserstein distance is defined through the Monge--Kantorovich optimal transport problem \cite{Villani2003,Rachev1998mass,dong2024monge}, i.e.,
\begin{equation}\label{def:wasserstein}
W_2(\mu,\nu)^2
:= \inf_{\gamma\in\Gamma(\mu,\nu)} \int_{\bbR^n\times\bbR^n} \|x-y\|^2 \,\gamma(\rmd x,\rmd y),
\end{equation}
over the set of all admissible couplings 
\begin{align*}
\Gamma(\mu,\nu)
:= \Bigl\{&\gamma\in\calP(\bbR^n\times\bbR^n)\ \Bigm|\ \gamma(X_1\times\bbR^n)=\mu(X_1),\\
&\gamma(\bbR^n\times X_2)=\nu(X_2)\ \text{for all Borel }X_1,X_2\Bigr\}.    
\end{align*}

In one-dimensional settings, if $F_\mu$ and $F_\nu$ are the cumulative distribution functions of $\mu$ and $\nu$, and if $F_\mu^{-1}(z)=\inf\{s\in\bbR\mid F_\mu(s)\ge z\}$ denotes the left-continuous generalized inverse~\cite[Theorem~2.18]{Villani2003}, then
\[
W_2(\mu,\nu)^2
= \int_0^1 \bigl(F_\mu^{-1}(z)-F_\nu^{-1}(z)\bigr)^2 \,\rmd z.
\]

For $\theta\in\calS^{n-1}$, let $P_\theta:\bbR^n\to\bbR$ be the projection $P_\theta(x)=\theta^\top x$. The push-forward of $\mu$ by $P_\theta$ is denoted as $P_\theta\#\mu$, defined by $P_\theta\#\mu(X)=\mu(P_\theta^{-1}(X))$ for every Borel set $X\subset\bbR$. The sliced quadratic Wasserstein distance between $\mu,\nu\in\calP_2(\bbR^n)$ is defined as 
\begin{equation}\label{def:slice_wasserstein}
SW_2(\mu,\nu)^2
:= \int_{\calS^{n-1}} W_2\bigl(P_\theta\#\mu,\ P_\theta\#\nu\bigr)^2 \,\sigma(\rmd\theta),
\end{equation}
where $\sigma$ is the uniform probability measure on $\calS^{n-1}$. In words, $SW_2$ averages the one-dimensional transport between the linear projections of $\mu$ and $\nu$. In the one-dimensional case, $W_2$ admits a closed-form in terms of cumulative distribution functions and their inverses, making the sliced construction straightforward to compute.

The control viewpoint of the quadratic cost goes back to Benamou--Brenier \cite{Benamou2000}, and later \cite{Chen2021optimal1}. In Eulerian form, one steers a density by a velocity field and minimizes quadratic action:
\begin{align}\label{eq:bb-eulerian}
\begin{aligned}
&\inf_{\rho,v}\ &&\int_0^{\ft}\!\int_{\bbR^n} \frac12\,\rho(t,x)\,\|v(t,x)\|^2 \,\rmd x\,\rmd t\\
&~\text{s.t.}
&&\partial_t \rho + \nabla_x\!\cdot(\rho v)=0,\\
& &&\rho(0,\cdot)=\mu,\ \ \rho(\ft,\cdot)=\nu.    
\end{aligned}
\end{align}
In Lagrangian form, one minimizes expected control energy along trajectories as
\begin{align}\label{eq:bb-lagrangian}
\begin{aligned}
&\inf_{u(\cdot)}\ &&\int_0^{\ft} \mathbb E\!\left[\frac{1}{2} \|u(t)\|^2\right]\rmd t\\
&~\text{s.t.}\quad &&\dot x(t)=u(t),\\
& &&x(0)\sim\mu,\ \ x(\ft)\sim\nu.   
\end{aligned}
\end{align}
Notably, the two formulations in \eqref{eq:bb-eulerian} and \eqref{eq:bb-lagrangian} are in fact equivalent. This correspondence is not limited to the classical setting, but continues to hold for linear dynamics of the form $\dot x=A(t)x+B(t)u(t)$ under standard reachability conditions, namely, absolute continuity of the Brenier map and controllability of the pair $(A,B)$ on $[0,\ft]$, see e.g.,~\cite{Chen2017optimal}.

\section{Iterative Sliced Optimal Transport}\label{sec:iterative}
In this paper, we consider the single integrator system
\begin{align}
    \dot{x} (t) &= u(t) , ~~ t \ge 0, \label{eq:integrator}\\
    x(0) &\sim \rho_0 (\cdot)\nonumber ,
\end{align}
where $ x(t) \in \bbR^n $ denotes the state, $ u(t) \in \bbR^n $ denotes the control input, and $ \rho_0 $ denotes the probability density function of the initial state $ x(0) $. 
Under the feedback controller (or the velocity field) $ u(t) = v (t,x(t)) $, the probability law $ \mu_t $ of $ x(t) $ solves the continuity equation $ \partial_t \mu_t = - \nabla_x \cdot ( \mu_t v )  $ in a weak  (distributional)  sense~\cite{Ambrosio2005},~\cite[Chapter 8]{Villani2003}.

For simplicity, we assume throughout that $x(t)$ admits a density $\rho(t,\cdot)$ solving the continuity equation in the classical sense for $(t,x)\in(0,\infty)\times\bbR^n$, namely,
\begin{align}\label{eq:continuity_eq}
\begin{aligned}
\partial_t \rho (t,x) &= - \nabla_x \cdot \left(\rho(t,x) v(t,x) \right),\\
\rho(0,x) &= \rho_0 (x).   
\end{aligned}
\end{align}
Then, for given initial and target densities $ \rho_0 $ and $\rho_f $, we wish to find a controller $ v $ that steers $ \rho $ from $ \rho_0 $ to $ \rho_f $ over a finite horizon $ T > 0 $, i.e., $\rho(T,x) = \rho_f$.

Following \cite{Chen2017optimal}, the minimum-energy density controller that solves \eqref{eq:bb-lagrangian} with $\mu=\rho_0$ and $\nu=\rho_f$ is
\begin{equation}\label{eq:minimum_energy}
v(t,x)=-\frac{1}{T}\big(\calT\circ\calT_{0:t}^{-1}(x)-\calT_{0:t}^{-1}(x)\big),\, (t,x)\in[0,T]\times\bbR^n,
\end{equation}
where $\calT:\bbR^n\to\bbR^n$ is the optimal transport map pushing $\rho_0$ to $\rho_f$ with
\[
W_2(\rho_0,\rho_f)^2=\int_{\bbR^n}\|x-\calT(x)\|^2\,\rho_0(x)\,\rmd x,
\]
and 
$$
\calT_{0:t}(x) :=\frac{T-t}{T}\,x+\frac{t}{T}\,\calT(x)
$$
is the displacement interpolation. However, the computation of the optimal map $ \calT $ between continuous distributions is generally difficult in high dimensions. To this end, we utilize sliced optimal transport.

Before introducing the method, we first discretize the continuous-time system~\eqref{eq:integrator}. Let the sampling instants be $ t_k = kh $ for $ k \in \{0,1,\ldots,T/h\} $, where $ h > 0 $ is a sampling period such that $ T_d := T/h \in \bbN $, $ t_0 = 0 $, and $ t_{T_d} = T $. 
The input is held constant on each interval, $u(t)=u_k$ for $t\in[t_k,t_{k+1})$. For the single-integrator $\dot x=u$, this yields the exact step:
\begin{align}\label{eq:discrete_sys}
x_{k+1}=x_k+h u_k .
\end{align}

Now, we explain our approach, that is inspired by sliced optimal transport and model predictive control (MPC)~\cite{Rawlings2017}. At each time step $ k $, we select a slicing direction $ \theta_k \in \calS^{n-1} $ and compute the one-dimensional optimal transport map $ \scrT_{k}^{\theta_k} : \bbR \rightarrow \bbR $ that moves $ \rho_{\theta_k}(t_k,\cdot) $ to $ \rho_{f,{\theta_k}} $. Here, $ \rho_{\theta_k}(t_k,\cdot) $ and $ \rho_{f,{\theta_k}} $ denote the density functions obtained by the push-forward of $ \rho(t_k,\cdot) $ and $ \rho_f $ through $ P_{\theta_k}  $, respectively, i.e.,
\begin{align*}
    \rho_{\theta_k} (t_k,s) &= \int_{\bbR^n} \delta \! \left(s-\theta_k^\top x\right) \! \rho(t_k,x) \rmd x, && s \in \bbR, \\
    \rho_{f,\theta_k} (s) &= \int_{\bbR^n} \delta \! \left(s-\theta_k^\top x\right) \! \rho_f(x) \rmd x , && s \in \bbR,
\end{align*}
where $\delta$ denotes the Dirac delta function.
Next, we calculate a controller that steers the density $ \rho(t_k,\cdot) $ to the target $ \rho_f $ along the direction $ \theta_k $. By projecting the state $ x_k $ onto the direction $ \theta_k $, i.e., $ s_k = \theta_k^\top x_k \in \bbR $, the system \eqref{eq:discrete_sys} is transformed into 
$$
s_{k+1} = s_k + h u_{\theta_k,k},
$$
where $ u_{\theta_k,k} := \theta_k^\top u_k \in \bbR $. Since the mass of $\rho_{\theta_k} (t_k,s)$ at $ s = \theta_k^\top x $ should be moved to $ \scrT_k^{\theta_k} (\theta_k^\top x) $ for any $x\in \bbR^n$, we solve the following fixed end-point optimal control problem
\begin{align}
    &\inf_{\{u_{\theta_k,\ell}\}_{\ell = k}^{T_d -1}} && \sum_{\ell=k}^{T_d-1}  u_{\theta_k,\ell}^2 \label{prob:discrete_end_point}\\
    &\quad~~ {\rm s.t.} ~  && s_{\ell+1} = s_\ell + h u_{\theta_k,\ell}, ~~ \ell \in \{k,\ldots,T_d-1\}, \nonumber \\
    & && s_k = \theta_k^\top x , \ s_{T_d} = \scrT_k^{\theta_k} (\theta_k^\top x). \nonumber
\end{align}
The optimal control is explicitly given as 
$$
u_{\theta_k,\ell}^* = -\frac{ \theta_k^\top x - \scrT_k^{\theta_k} (\theta_k^\top x)}{T - t_k} 
$$ 
for all $ \ell \in \{k,\ldots,T_d-1\}$.\\

Similarly to MPC, only the first control input $ u_{\theta_k,k}^* $ from the optimal input sequence $ \{u_{\theta_k,\ell}^* \}_{\ell=k}^{T_d-1} $ is applied to the system. 
For $ u_{\theta_k,k}^* $ in the projected space, we employ the following control input in the original space satisfying $ \theta_k^\top u_k = u_{\theta_k,k}^* $:
\begin{equation}\label{eq:slice_controller}
    u_k = v_k (x_k;\theta_k) := - \frac{1}{T - t_k} \! \left( \theta_k^\top x_k - \scrT_k^{\theta_k} (\theta_k^\top x_k) \right) \! \theta_k .
\end{equation}

In the next time step, a new slicing direction $ \theta_{k+1} $ is chosen, and the optimal transport problem is solved again. This procedure is repeated until $ k = T_d $, which corresponds to the terminal time $ t = T $.
We call \eqref{eq:slice_controller} the iterative sliced controller.

When the sampling period $ h $ is large, the state density $ \rho $ will fail to reach the target $ \rho_f $ because only a limited number of directions are considered until the terminal time. We expect that as $ h $ becomes small, the state density is steered closer to the target under the proposed controller~\eqref{eq:slice_controller}. To verify this, in the next section, we analyze the idealized setting where infinitely many slicing directions are available at each time, informally corresponding to the limit as $ h \rightarrow 0 $.

\begin{remark}
In the finite-horizon density controller~\eqref{eq:slice_controller}, the denominator $ T - t_k $ is time-dependent. If we replace the horizon $ T_d -1 $ in \eqref{prob:discrete_end_point} by a receding horizon $ T_d + k -1 $ like MPC, the state equation is given by
\begin{align*}
    x_{k+1} &= x_k - \frac{1}{T} \! \left( \theta_k^\top x_k - \scrT_k^{\theta_k} (\theta_k^\top x_k) \right) \! \theta_k .
\end{align*}
This update is equivalent to performing a gradient descent method that moves $ \rho $ towards $ \rho_f $ along the direction $ \theta_k $, while changing $ \theta_k $ at each time step. The coefficient $ T^{-1} $ corresponds to the step size.
Moreover, if, instead of a single direction, we employ a set of slicing directions $ \{\theta_1,\ldots,\theta_n\} $ that forms an orthogonal basis of $\bbR^n$, the resulting update rule coincides with the iterative distribution transfer algorithm proposed in \cite{Pitie2007automated}.
The idea of integrating optimal transport and MPC can also be found in \cite{Ito2023entropic,Ito2023lcss} for discrete measures.
\end{remark}

\section{Sliced Distribution Steering between Gaussians}\label{sec:single_integrator}
In this section, we construct a controller that steers $\rho$ to the target $\rho_f$ based on their sliced Wasserstein distance. The resulting controller can be seen as the limiting case of the iterative sliced controller~\eqref{eq:slice_controller}. Moreover, we reveal a connection between sliced optimal transport and density control via the energy consumption incurred by the constructed controller.

\subsection{Gradient analysis}

The construction of such a density controller is based on the time derivative of $ SW_2 (\rho, \rho_f)^2 $ with respect to \eqref{eq:continuity_eq}.
Denote by $ \rho_\theta(t,\cdot) $ and $ \rho_{f,\theta} $ the density functions of $ P_{\theta}\# \rho(t,\cdot) $ and $ P_{\theta}\# \rho_f $, respectively, which we call sliced densities. Then, the time derivative is given as follows, and the proof can be found in Appendix~\ref{app:time_derivative}.
\begin{proposition}\label{prop:time_derivative}
    Assume that differentiation and integration can be interchanged for almost all $ t\in (0,T) $ as follows:\footnote{A sufficient condition for the interchange of differentiation and integration can be found, for example, in \cite[Theorem~2.27]{Folland1999real}.}
    \begin{align}
        \frac{\rmd }{\rmd t} SW_2 (\rho(t,\cdot), \rho_f)^2 = \int_{\calS^{n-1}}  \partial_t W_2^2 (\rho_\theta (t,\cdot), \rho_{f,\theta} ) \sigma(\rmd \theta) . \label{eq:interchange}
    \end{align}
    Assume also that for any $ i \in \{1,\ldots,n\} $ and $ t \ge 0 $, we have 
    $$ 
    \rho(t,x) v_i (t,x) \rightarrow 0 \ \mbox{ as }\ |x_i| \rightarrow \infty. 
    $$
    Then, for any $ \rho_0, \rho_f \in \calP_2 (\bbR^n) $ and almost all $ t \in (0,T) $, it holds that
\begin{align}
    &\frac{\rmd }{\rmd t} SW_2 (\rho(t,\cdot), \rho_f)^2 \nonumber\\
    &= 2 \int_{\bbR^n} \left[ \int_{\calS^{n-1}} \left(\theta^\top x - \calT_t^\theta (\theta^\top x)  \right) \theta \sigma(\rmd \theta) \right]^\top  \nonumber\\
    &\quad \times v(t,x) \rho(t,x) \rmd x , \label{eq:dSW_dt}
\end{align}
where $ \calT_t^\theta : \bbR \rightarrow \bbR $ is the unique optimal transport map from $ \rho_\theta (t,\cdot) $ to $ \rho_{f,\theta} $, that is,
\begin{equation*}
    W_2 (\rho_{\theta} (t,\cdot), \rho_{f,\theta} )^2 = \int_{\bbR} \left| s - \calT_t^\theta (s) \right|^2 \rho_{\theta} (t,s) \rmd s .
\end{equation*}
\end{proposition}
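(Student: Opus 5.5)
By \eqref{eq:interchange}, it suffices to compute $\partial_t W_2^2(\rho_\theta(t,\cdot),\rho_{f,\theta})$ for each fixed $\theta$. We then integrate over $\sigma$ and swap the order of the $\theta$- and $x$-integrals by Fubini.

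\textbf{Step 1 (projected continuity equation).} For fixed $\theta$, the sliced density $\rho_\theta(t,s)=\int\delta(s-\theta^\top x)\rho(t,x)\,\rmd x$ satisfies a one-dimensional continuity equation. To see this, test against $\varphi\in C_c^\infty(\bbR)$. Using \eqref{eq:continuity_eq}, integrating by parts in $x$, and invoking the decay assumption $\rho v_i\to 0$ to kill the boundary terms, we get
\begin{equation*}
\frac{\rmd}{\rmd t}\int_{\bbR}\varphi(s)\rho_\theta(t,s)\,\rmd s=\int_{\bbR^n}\varphi'(\theta^\top x)\,\theta^\top v(t,x)\,\rho(t,x)\,\rmd x .
\end{equation*}
Hence $\partial_t\rho_\theta+\partial_s(\rho_\theta v_\theta)=0$, with the projected velocity $v_\theta(t,s)=\bbE[\theta^\top v(t,x)\mid \theta^\top x=s]$.

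\textbf{Step 2 (first variation of the 1D distance).} The one-dimensional $W_2^2(\cdot,\rho_{f,\theta})$ has first variation given by the Kantorovich potential $\phi_t^\theta$, where $(\phi_t^\theta)'(s)=2(s-\calT_t^\theta(s))$. This is the standard Wasserstein derivative formula along a continuity equation \cite{Ambrosio2005,Villani2003}:
\begin{equation*}
\partial_t W_2^2(\rho_\theta(t,\cdot),\rho_{f,\theta})=\int_{\bbR}2\bigl(s-\calT_t^\theta(s)\bigr)v_\theta(t,s)\rho_\theta(t,s)\,\rmd s,
\end{equation*}
valid for a.e.\ $t$.

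A more self-contained route uses the quantile formula $W_2^2=\int_0^1(F^{-1}_{t}(z)-F_f^{-1}(z))^2\,\rmd z$. Differentiating the relation $F_t(F_t^{-1}(z))=z$, and using $\partial_tF_t(s)=-\rho_\theta v_\theta$ from Step 1, gives $\partial_tF_t^{-1}(z)=v_\theta(t,F_t^{-1}(z))$. The change of variables $s=F_t^{-1}(z)$, together with $\calT_t^\theta=F_f^{-1}\circ F_t$, then yields the display above.

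\textbf{Step 3 (lift back to $\bbR^n$).} Applying the pushforward identity with the conditional expectation, the integral in Step 2 equals
\begin{equation*}
2\int_{\bbR^n}\bigl(\theta^\top x-\calT_t^\theta(\theta^\top x)\bigr)\theta^\top v(t,x)\rho(t,x)\,\rmd x .
\end{equation*}
Integrating over $\theta$ and applying Fubini produces \eqref{eq:dSW_dt}. Fubini requires integrability, which follows from Cauchy–Schwarz given finite second moments of $\rho$ and $\rho_f$ and the finite kinetic energy implicit in the interchange assumption.

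\textbf{Main obstacle.} The hard step is Step 2: rigorously differentiating $W_2^2$ in time when $\rho_\theta$ may vanish or when $\calT_t^\theta$ is merely monotone. I would first rely on the classical a.e.-$t$ differentiability result for absolutely continuous curves in $\calP_2(\bbR)$, citing \cite[Theorem~8.4.7]{Ambrosio2005}. As a fallback, I would use the formal quantile computation, justified under the paper's standing smoothness assumptions; uniqueness of $\calT_t^\theta$ follows from $\rho_\theta$ being a density.
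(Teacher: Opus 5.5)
Your proposal follows the paper's proof essentially step for step: project the continuity equation in weak form, using the decay assumption to remove the boundary terms; differentiate the one-dimensional $W_2^2$ for a.e.\ $t$ via \cite[Theorem~8.4.7]{Ambrosio2005}, with the optimal coupling induced by the monotone map $\calT_t^\theta$; then lift back to $\bbR^n$ through the conditional-expectation identity and integrate over $\sigma$. The quantile computation and the explicit Fubini check are extras the paper omits, but the integrability for Fubini actually comes from $\int_{\bbR^n}\|v(t,x)\|^2\rho(t,x)\,\rmd x<\infty$ (which Theorem~8.4.7 already requires), not from the interchange assumption \eqref{eq:interchange}.
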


\subsection{Reachability between Gaussian measures}
In what follows, we assume that the initial and target densities are given by Gaussian distributions, i.e.,
\begin{equation}\label{eq:gaussian_marginals}
    \rho_0(\cdot)  = \calN(\cdot \mid m_0, \Sigma_0), ~~ \rho_f(\cdot)  = \calN( \cdot \mid m_f, \Sigma_f) ,
\end{equation}
where $ \Sigma_0, \Sigma_f \succ 0 $ are positive definite.
The time derivative \eqref{eq:dSW_dt} of $ SW_2 (\rho(t,\cdot),\rho_f)^2 $ motivates the following feedback control law for steering $ \rho(t,\cdot) $ towards $ \rho_f $.
\begin{align}
    v(t,x) := - \lambda(t) \int_{\calS^{n-1}} \left(\theta^\top x - \calT_t^\theta (\theta^\top x) \right) \theta \sigma (\rmd \theta) ,& \nonumber\\
    (t,x) \in [0,T) \times \bbR^n ,& \label{eq:optimal_controller_single_integ}
\end{align}
where $ \lambda(t) := (T-t)^{-1} $.
Under this controller~\eqref{eq:optimal_controller_single_integ}, by Proposition~\ref{prop:time_derivative} and  for almost all $ t\in (0,T) $, we have 
\begin{align}
    &\frac{\rmd }{\rmd t} SW_2 (\rho(t,\cdot), \rho_f)^2 \nonumber\\
    &= -2 \lambda(t) \int_{\bbR^n} \left\| \int_{\calS^{n-1}} \left(\theta^\top x - \calT_t^\theta (\theta^\top x)  \right) \theta \sigma(\rmd \theta) \right\|^2 \nonumber\\
    &\quad \times \rho(t,x) \rmd x \le 0 . \label{eq:dSW_dt_1}
\end{align}
Therefore, $ SW_2 (\rho(t,\cdot), \rho_f) $ is non-increasing.
The proposed controller~\eqref{eq:optimal_controller_single_integ} is also obtained by averaging the iterative sliced controller~\eqref{eq:slice_controller} with respect to the uniform measure $ \sigma $ on $ \calS^{n-1} $. 
Since \eqref{eq:slice_controller} with $\theta_k$ sampled from $\sigma$ is expected to converge to \eqref{eq:optimal_controller_single_integ} as $h \rightarrow 0$, we also refer to \eqref{eq:optimal_controller_single_integ} as the ideal sliced controller.

Under \eqref{eq:gaussian_marginals}, the sliced densities of $ \rho_0 $ and $ \rho_f $ are also Gaussian, i.e., 
\begin{align*}
\rho_\theta (0,y) &= \calN(y \mid \theta^\top m_0, \theta^\top \Sigma_0 \theta),\\
\rho_{f,\theta} (y) &= \calN(y \mid \theta^\top m_f, \theta^\top \Sigma_f \theta).
\end{align*}
By \cite[Remark~2.31]{Peyre2019computational}, for all $\theta\in\calS^{n-1}$, the optimal transport map $\calT_0^\theta$ between $\rho_\theta(0,\cdot)$ and $\rho_{f,\theta}$ is explicitly
\[
\calT_0^\theta(\theta^\top x)
= \theta^\top m_f
  + \sqrt{\frac{\theta^\top \Sigma_f \theta}{\theta^\top \Sigma_0 \theta}}
    \,(\theta^\top x - \theta^\top m_0).
\]
Consequently, the control law \eqref{eq:optimal_controller_single_integ} at initial time reads
\begin{align}
    v(0,x) &= - \frac{\lambda(0)}{n} x + \lambda(0) \int \calT_t^\theta (\theta^\top x) \theta \rmd \sigma \nonumber\\
    &= \lambda(0) \! \left[ \left(\int r(0,\theta) \theta \theta^\top \rmd \sigma \right) - \frac{1}{n} I \right] x \nonumber\\
    &\quad - \lambda(0) \! \left[ \left( \int r(0,\theta) \theta \theta^\top \rmd \sigma  \right) m_0 - \frac{1}{n} m_f  \right] , \label{eq:initial_feedback}
\end{align}
where $\displaystyle r(0,\theta) := \sqrt{\frac{\theta^\top \Sigma_f \theta}{\theta^\top \Sigma_0 \theta}}  $. This is an affine state-feedback controller. Since Gaussianity is preserved under linear dynamics, it is expected that for any $ t \in (0,T) $, the state $ x(t) $ follows a Gaussian distribution and the proposed controller is affine. Indeed, the following result shows the Gaussianity of the state and the affine structure of the proposed controller.
The proof is provided in Appendix~\ref{app:linearity_Gaussian}.
\begin{proposition}\label{prop:linearlity_Gaussian}
Assume that the differential equation $ \dot{x}(t) = v(t,x(t)) $ with \eqref{eq:gaussian_marginals} and \eqref{eq:optimal_controller_single_integ} has a unique solution on $ [0,T) $. Assume also that the following differential equation admits a unique solution satisfying $ \Sigma(t) \succ 0 $ for any $ t \in (0,T] $.
\begin{align*}
\dot{\Sigma} (t) &= K(t,\Sigma(t)) \Sigma(t) +  \Sigma(t) K(t,\Sigma(t))^\top,\\
\Sigma (0) &= \Sigma_0 , \\
K(t,\Sigma) &:= \lambda(t)\Big[ \int_{\calS^{n-1}} \sqrt{\frac{\theta^\top \Sigma_f \theta}{\theta^\top \Sigma \theta}} \theta \theta^\top \sigma(\rmd \theta) - \frac{1}{n} I \Big].
\end{align*}
Then, under the initial and target densities \eqref{eq:gaussian_marginals}, it holds that
\begin{align}
\dot{x}(t) &= K(t,\Sigma(t)) x(t) + \eta(t,\Sigma(t)), \\
x(t) &\sim \calN(m(t),\Sigma(t))
\end{align}
for any $ t \in [0,T) $, and with $\Sigma \succ 0$, we have 
\begin{align}
\eta(t,\Sigma) &:= - \lambda(t)  \Bigl[ \Bigl( \int \sqrt{\frac{\theta^\top \Sigma_f \theta}{\theta^\top \Sigma \theta}} \theta \theta^\top \rmd \sigma  \Bigr) m(t) - \frac{1}{n} m_f  \Bigr] , \nonumber\\
\dot{m}(t) &= - \frac{\lambda(t)}{n} (m(t) - m_f), \nonumber\\
m(0) &= m_0 \nonumber.
\end{align}
\end{proposition}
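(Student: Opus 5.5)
The plan is a self-consistency (fixed-point) argument: build an explicit affine, hence Gaussian, candidate process from the given covariance ODE, check that the nonlinear feedback \eqref{eq:optimal_controller_single_integ} evaluated along it reduces to the claimed affine law, and then use the assumed uniqueness of the closed-loop solution to identify the candidate with $x(t)$. The main difficulty is that $v(t,\cdot)$ depends on the law of $x(t)$ itself, through the maps $\calT_t^\theta$. One cannot first compute the controller and then propagate the state. Solving the given ODE for $\Sigma(t)$ in advance is exactly what breaks this circularity.

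\textbf{Step 1 (affine form of the controller for Gaussian laws).} Fix $t\in[0,T)$. Suppose $\rho(t,\cdot)=\calN(m,\Sigma)$ with $\Sigma\succ0$. As in the computation leading to \eqref{eq:initial_feedback}, the one-dimensional Gaussian optimal map gives
\[
\calT_t^\theta(s)=\theta^\top m_f+\sqrt{\tfrac{\theta^\top\Sigma_f\theta}{\theta^\top\Sigma\theta}}\,(s-\theta^\top m).
\]
By rotational invariance of $\sigma$ we have $\int\theta\theta^\top\,\sigma(\rmd\theta)=\frac1n I$. Substituting into \eqref{eq:optimal_controller_single_integ} yields
\[
v(t,x)=K(t,\Sigma)x+\eta(t,\Sigma),
\]
where $\eta$ is evaluated with the mean $m$. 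Taking expectations under $\calN(m,\Sigma)$ gives $\bbE[v]=-\frac{\lambda(t)}{n}(m-m_f)$, because the terms $\lambda R m$ with $R:=\int r\,\theta\theta^\top\rmd\sigma$ cancel.

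\textbf{Step 2 (candidate process).} Let $\Sigma(t)$ be the assumed solution of the matrix ODE, which satisfies $\Sigma(t)\succ0$. Let $m(t)$ be the explicit solution of $\dot m=-\frac{\lambda}{n}(m-m_f)$, $m(0)=m_0$, namely $m(t)-m_f=\bigl(\frac{T-t}{T}\bigr)^{1/n}(m_0-m_f)$. Since $\Sigma(\cdot)$ is continuous and positive definite, the integrand $\sqrt{\theta^\top\Sigma_f\theta/\theta^\top\Sigma\theta}\,\theta\theta^\top$ is continuous and bounded on $\calS^{n-1}$. Hence $\tilde K(t):=K(t,\Sigma(t))$ and $\tilde\eta(t):=\eta(t,\Sigma(t))$ are continuous on $[0,T)$.

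Define $y$ by the linear time-varying ODE $\dot y=\tilde K(t)y+\tilde\eta(t)$ with $y(0)=x(0)\sim\calN(m_0,\Sigma_0)$. Its solution is an affine image of $y(0)$, so $y(t)$ is Gaussian. Its mean $\bar m$ and covariance $\bar\Sigma$ satisfy
\[
\dot{\bar m}=\tilde K\bar m+\tilde\eta,\qquad \dot{\bar\Sigma}=\tilde K\bar\Sigma+\bar\Sigma\tilde K^\top .
\]
The function $\Sigma(t)$ solves the second equation, which is linear in $\bar\Sigma$ with continuous coefficients, so $\bar\Sigma=\Sigma$ by uniqueness. For the mean, the identity from Step 1 gives $\tilde K m+\tilde\eta=-\frac{\lambda}{n}(m-m_f)$, so $m(t)$ solves the mean equation and therefore $\bar m=m$. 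Thus $y(t)\sim\calN(m(t),\Sigma(t))$.

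\textbf{Step 3 (identification).} Since the law of $y(t)$ is $\calN(m(t),\Sigma(t))$ with $\Sigma(t)\succ0$, Step 1 shows that the feedback \eqref{eq:optimal_controller_single_integ} computed from the law of $y(t)$ equals $\tilde K(t)y+\tilde\eta(t)$. Hence $y$ solves the closed-loop equation $\dot y=v(t,y)$ with the correct initial law. By the assumed uniqueness of solutions of $\dot x=v(t,x)$, we conclude $x\equiv y$ on $[0,T)$, which gives all the claimed statements.

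The delicate point is to make sure the uniqueness hypothesis is applied to the mean-field, law-dependent equation and not to a frozen vector field. Beyond that, only the continuity of $\tilde K$ and $\tilde\eta$ needs checking, and this follows from $\Sigma(t)\succ0$.
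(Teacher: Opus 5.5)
Your proposal is correct and follows essentially the same route as the paper: build the linear time-varying process driven by $K(t,\Sigma(t))$ and $\eta(t,\Sigma(t))$, identify its mean and covariance with $m(t)$ and $\Sigma(t)$ via uniqueness for linear ODEs, note that the sliced feedback computed from its Gaussian law reduces to the same affine field, and conclude from the assumed uniqueness of the closed-loop, law-dependent equation. Your additional checks (continuity of the coefficients, and the explicit identification of the covariance through the linear Lyapunov equation) supply details that the paper simply calls well known.
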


By the Gaussianity of $ x(t) $, the state density $ \rho(t,\cdot) $ is completely characterized by the mean $ m(t) $ and the covariance $ \Sigma(t) $. The following result shows that the state density is steered to the target $ \rho_f $ under the proposed controller~\eqref{eq:optimal_controller_single_integ}.
The proof is shown in Appendix~\ref{app:convergence}.
\begin{theorem}\label{thm:convergence}
Suppose that the assumptions in Propositions~\ref{prop:time_derivative} and \ref{prop:linearlity_Gaussian} are satisfied. Then, it holds that
\begin{align}
\lim_{t\nearrow T} m(t) = m_f ,~~ \lim_{t\nearrow T} \Sigma(t) = \Sigma_f .
\end{align}
\end{theorem}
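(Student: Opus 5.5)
The mean and the covariance are handled separately. By Proposition~\ref{prop:linearlity_Gaussian}, the mean obeys the scalar-gain linear ODE $\dot m=-\frac{\lambda(t)}{n}(m-m_f)$, whose solution is $m(t)-m_f=\bigl(\tfrac{T-t}{T}\bigr)^{1/n}(m_0-m_f)$. This gives $m(t)\to m_f$ directly. For the covariance I plan a Lyapunov argument with $V(t):=SW_2(\rho(t,\cdot),\rho_f)^2$. The conclusion will follow from $V(t)\to0$, because two Gaussians with $SW_2=0$ have $\theta^\top\Sigma\theta=\theta^\top\Sigma_f\theta$ for every $\theta\in\calS^{n-1}$, and this forces $\Sigma=\Sigma_f$.

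\emph{Step 1 (explicit form of $V$ and of the decay rate).} Write $s_\theta:=\sqrt{\theta^\top\Sigma\theta}$ and $f_\theta:=\sqrt{\theta^\top\Sigma_f\theta}$. The Gaussian closed form of the one-dimensional $W_2$ gives
\begin{equation*}
V=\int\bigl(\theta^\top(m-m_f)\bigr)^2\rmd\sigma+F(\Sigma),\qquad F(\Sigma):=\int (s_\theta-f_\theta)^2\,\rmd\sigma .
\end{equation*}
The integrand $g(x):=\int(\theta^\top x-\calT_t^\theta(\theta^\top x))\theta\,\rmd\sigma$ in \eqref{eq:dSW_dt_1} is affine, namely $g(x)=A(\Sigma)(x-m)+\tfrac1n(m-m_f)$. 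Here $A(\Sigma):=\int(1-f_\theta/s_\theta)\theta\theta^\top\rmd\sigma$, and I have used $\int\theta\theta^\top\rmd\sigma=I/n$. Under $x\sim\calN(m,\Sigma)$ this yields
\begin{equation*}
\tfrac{\rmd}{\rmd t}V=-2\lambda(t)\Bigl[\trace\bigl(A\Sigma A\bigr)+\tfrac1{n^2}\|m-m_f\|^2\Bigr].
\end{equation*}

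\emph{Step 2 (convex structure, the key step).} I expect the main difficulty to be bounding the covariance term $\trace(A\Sigma A)$ from below by something involving $F(\Sigma)$. The reason it is delicate is that $A$ is an average over directions, so it could a priori vanish even when $\Sigma\neq\Sigma_f$. I plan to rule this out by observing that $A(\Sigma)=\nabla_\Sigma F(\Sigma)$, since $\partial_\Sigma s_\theta=\theta\theta^\top/(2s_\theta)$. Moreover $F$ is convex on the positive definite cone: $(s_\theta-f_\theta)^2=\theta^\top\Sigma\theta-2f_\theta s_\theta+f_\theta^2$, where the first term is linear and $-s_\theta$ is convex because $\Sigma\mapsto\sqrt{\theta^\top\Sigma\theta}$ is concave. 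Convexity together with $F(\Sigma_f)=0=\min F$ gives
\begin{equation*}
F(\Sigma)\le\langle A(\Sigma),\Sigma-\Sigma_f\rangle\le\|A(\Sigma)\|_F\,\|\Sigma-\Sigma_f\|_F .
\end{equation*}

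\emph{Step 3 (compactness and integration).} By the hypothesis of Proposition~\ref{prop:linearlity_Gaussian}, $\Sigma(\cdot)$ is continuous on $[0,T]$ and positive definite there. Hence $\|\Sigma(t)-\Sigma_f\|_F\le C$ and $\lambda_{\min}(\Sigma(t))\ge c>0$ uniformly in $t$. It follows that $\trace(A\Sigma A)\ge c\|A\|_F^2\ge (c/C^2)F^2$. Similarly, the mean term is bounded below by a constant times the square of the mean part of $V$, using that this part is at most $\|m-m_f\|^2$ and is bounded on $[0,T]$. Therefore $\dot V\le-\kappa\lambda(t)V^2$ for some $\kappa>0$. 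Integrating, and using $\int_0^t\lambda=\log\frac{T}{T-t}$, gives
\begin{equation*}
V(t)\le\bigl(V(0)^{-1}+\kappa\log\tfrac{T}{T-t}\bigr)^{-1}\to0\quad\text{as } t\nearrow T .
\end{equation*}
Hence $F(\Sigma(t))\to0$, which gives $\theta^\top\Sigma(t)\theta\to\theta^\top\Sigma_f\theta$ uniformly in $\theta$, and by polarization $\Sigma(t)\to\Sigma_f$.
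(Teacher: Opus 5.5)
Your argument is correct, and it takes a genuinely different route from the paper's.

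\textbf{The paper's route.} The paper treats mean and covariance together. After the change of variables $\tau=-\log(T-t)$, it argues that since $SW_2^2$ is bounded below and nonincreasing, its $\tau$-derivative must tend to zero. From this it gets $m(t)\to m_f$ and $\bbE\|\bar K(t)(x(t)-m(t))\|^2\to0$, hence $\|\bar K(t)\|_{\rm F}\to0$ via $\lambda_{\min}(\Sigma(t))>0$. It then invokes Lemma~\ref{lem:harmonic}: multiply $\bar K=0$ by $\Sigma$ and by $\Sigma_f$, take traces, and combine the results into a vanishing integral of a nonnegative function.

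\textbf{Your route.} You solve the mean ODE in closed form. For the covariance you use $A=-\bar K=\nabla F$ with $F$ convex, which gives $F(\Sigma)\le\|A\|_{\rm F}\|\Sigma-\Sigma_f\|_{\rm F}$. This is a quantitative version of Lemma~\ref{lem:harmonic}, and it turns the dissipation identity into $\dot V\le-\kappa\lambda(t)V^2$.

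\textbf{What each buys.} Your route gives an explicit rate, $V(t)\lesssim(\log\frac{T}{T-t})^{-1}$. It also removes two steps the paper leaves implicit.
\begin{itemize}
\item A bounded monotone function need not have derivative tending to zero. The paper's step needs a Barbalat-type uniform continuity argument, which is available here because $\rmd\Sigma/\rmd\tau=-(A\Sigma+\Sigma A)$ stays bounded.
\item Lemma~\ref{lem:harmonic} only treats the exact equation $\bar K=0$. Passing from $\bar K(t)\to0$ to $\Sigma(t)\to\Sigma_f$ therefore needs an extra continuity or compactness step.
\end{itemize}
The paper's route, in turn, is purely algebraic. It needs neither the closed form of $SW_2$ between Gaussians nor convexity, and it obtains the mean from the same Lyapunov function.

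\textbf{Minor points.} Both proofs rest on the same reading of the hypothesis of Proposition~\ref{prop:linearlity_Gaussian}: a uniform lower bound $\lambda_{\min}(\Sigma(t))\ge c>0$ on $[0,T)$. Your upper bound $C$ is actually automatic, since $\sqrt{\trace\Sigma(t)/n}\le\sqrt{F(\Sigma(t))}+\sqrt{\trace\Sigma_f/n}$ and $F(\Sigma(t))\le V(0)$. Your final ``uniformly in $\theta$'' step deserves one line. $F\to0$ is $L^2(\sigma)$ convergence of $s_\theta$. Together with boundedness of $\Sigma(t)$, this gives $\int|\theta^\top(\Sigma(t)-\Sigma_f)\theta|\,\rmd\sigma\to0$, and the left side is a norm on symmetric matrices.
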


Note that even if $\rho_0$ and $\rho_f$ are not Gaussian, the controller 
$$u(t) = K(t,\Sigma(t))x(t) + \eta(t,\Sigma(t))$$ steers the mean $m(t)$ and covariance $\Sigma(t)$ from the initial mean $m_0$ and covariance $\Sigma_0$ to the target mean $m_f$ and covariance $\Sigma_f$. Therefore, the sliced controller can be used for covariance steering.

\subsection{Relationship between sliced optimal transport and density control}
We are now in the position to establish a relationship between the sliced optimal transport and density control. For quadratic-cost optimal transport, minimum-energy density control yields the identity (see \cite{Benamou2000,Chen2017optimal})
\begin{align*}
    W_2 (\rho_0,\rho_f)^2 &= && \inf_{v} \  T \int_{\bbR^n} \int_0^T \| v(t,x) \|^2 \rho(t,x) \rmd x \rmd t \\
    & && \ {\rm s.t.} \ \text{\eqref{eq:continuity_eq}}, \ \rho(0,\cdot) = \rho_0, \ \rho(T,\cdot) = \rho_f .
\end{align*}
By analogy, the sliced Wasserstein objective also admits an energy characterization as shown in Theorem~\ref{thm:energy}, with proof given in Appendix~\ref{app:energy}.
\begin{theorem}\label{thm:energy}
Suppose that the assumptions in Propositions~\ref{prop:time_derivative} and \ref{prop:linearlity_Gaussian} are satisfied. Then, under the control law \eqref{eq:optimal_controller_single_integ}, it holds that
\begin{align*}
\bbE\! \left[\int_0^T \| u(t) \|^2 \rmd t \right] =\frac{1}{2} SW_2 (\rho(0,\cdot),\rho_f)^2 .
\end{align*}
\end{theorem}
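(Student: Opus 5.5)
The plan is to express the instantaneous control energy through the time derivative of $S(t):=SW_2(\rho(t,\cdot),\rho_f)^2$ and then integrate in time. Write
\[
w(t,x):=\int_{\calS^{n-1}}\bigl(\theta^\top x-\calT_t^\theta(\theta^\top x)\bigr)\theta\,\sigma(\rmd\theta),
\]
so that under \eqref{eq:optimal_controller_single_integ} we have $u(t)=v(t,x(t))=-\lambda(t)w(t,x(t))$. Hence
\[
\bbE\|u(t)\|^2=\lambda(t)^2\int_{\bbR^n}\|w(t,x)\|^2\rho(t,x)\,\rmd x .
\]
Comparing with \eqref{eq:dSW_dt_1}, which follows from Proposition~\ref{prop:time_derivative}, gives for almost all $t\in(0,T)$ the pointwise identity
\[
\bbE\|u(t)\|^2=-\tfrac{1}{2}\lambda(t)\,\dot S(t).
\]
The theorem then reduces to evaluating $-\frac12\int_0^T\lambda(t)\dot S(t)\,\rmd t$ and showing that it equals $\frac12 S(0)$.

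To carry this out I would use the Gaussian structure from Proposition~\ref{prop:linearlity_Gaussian}. Since each slice is Gaussian, $S$ is explicit:
\[
S(t)=\int\Bigl[\bigl(\theta^\top(m(t)-m_f)\bigr)^2+\Bigl(\sqrt{\theta^\top\Sigma(t)\theta}-\sqrt{\theta^\top\Sigma_f\theta}\Bigr)^2\Bigr]\sigma(\rmd\theta).
\]
The mean part equals $\frac1n\|m(t)-m_f\|^2$, and its dynamics are solvable in closed form from $\dot m=-\frac{\lambda}{n}(m-m_f)$. For the covariance part I would differentiate $\theta^\top\Sigma\theta$ along $\dot\Sigma=K\Sigma+\Sigma K^\top$ and try to obtain a scalar ODE of the form $\dot S=-c\,\lambda(t)S$. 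Such a law would reduce the energy integral to an elementary integral in $T-t$. An alternative route is integration by parts using $\dot\lambda=\lambda^2$:
\[
-\int_0^T\lambda\dot S\,\rmd t=\lambda(0)S(0)-\lim_{t\nearrow T}\lambda(t)S(t)+\int_0^T\lambda^2 S\,\rmd t .
\]
Here Theorem~\ref{thm:convergence} provides $S(t)\to0$, but the rate at which $\lambda S$ vanishes is also needed.

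The main obstacle is this last step: controlling the behaviour of $S$ near $t=T$, where the gain $\lambda$ blows up, and making the integrals combine to exactly $\frac12S(0)$. I would first test the mean component alone as a sanity check. There $\|m(t)-m_f\|^2=((T-t)/T)^{2/n}\|m_0-m_f\|^2$, so
\[
\bbE\|u\|^2=\frac{\lambda^2}{n^2}\|m-m_f\|^2\propto(T-t)^{2/n-2}.
\]
For $n\ge2$ this is not integrable at $T$. This computation has to be checked carefully before proceeding. If it holds, the energy would be infinite, and the statement could only hold under some reinterpretation, for example a different normalization of $\lambda$ or of the energy functional. In that case I would revisit the constant in $\lambda(t)$, e.g.\ $n/(T-t)$, needed for the identity to balance. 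If instead the covariance part contributes a compensating structure, the ODE route above should close the argument.
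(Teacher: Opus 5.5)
Your reduction $\bbE\|u(t)\|^2=-\tfrac12\lambda(t)\dot S(t)$ is exactly the paper's first step. Your sanity check is also correct, and it is decisive, so you need not hedge: the statement is false for $n\ge2$ whenever $m_0\neq m_f$. To make this airtight, use $\bbE[u(t)]=\dot m(t)$ and Jensen's inequality. Equivalently, use the orthogonal splitting \eqref{eq:decompose}; its covariance term is nonnegative and so cannot compensate. This gives
\[
\bbE\|u(t)\|^2\ \ge\ \|\dot m(t)\|^2=\frac{\|m_0-m_f\|^2}{n^2T^{2/n}}\,(T-t)^{2/n-2},
\]
which is not integrable at $t=T$ when $n\ge2$.

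A concrete instance satisfying every hypothesis is $n=2$, $\Sigma_0=\Sigma_f$, $m_0\neq m_f$. Then $K\equiv0$ and $\Sigma(t)\equiv\Sigma_f$, so the closed loop is the pure translation $\dot x=-\frac{1}{2(T-t)}(m(t)-m_f)$. In your ODE route this is $\dot S=-\frac{2}{n}\lambda S$ with $c=2/n\le1$. The energy diverges logarithmically, while $\tfrac12 SW_2(\rho_0,\rho_f)^2=\tfrac14\|m_0-m_f\|^2$.

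The paper reaches $\tfrac12 SW_2^2$ only through an error in the substitution $\tau=-\log(T-t)$ in Appendix~\ref{app:energy}. Since $\frac{\rmd f}{\rmd t}=\frac{1}{T-t}\frac{\rmd f(t(\tau))}{\rmd\tau}$ and $\rmd t=(T-t)\,\rmd\tau$, the integrand $\frac{1}{T-t}\frac{\rmd f}{\rmd t}\,\rmd t$ becomes $\Ee^{\tau}\frac{\rmd f(t(\tau))}{\rmd\tau}\,\rmd\tau$, not $\frac{\rmd f(t(\tau))}{\rmd\tau}\,\rmd\tau$. Dropping the chain-rule factor amounts to deleting the weight $\lambda(t)$, i.e.\ replacing $-\tfrac12\int_0^T\lambda\dot f\,\rmd t$ by $-\tfrac12\int_0^T\dot f\,\rmd t$, and only the latter telescopes. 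Appendix~\ref{app:convergence} uses the correct relation $\frac{\rmd}{\rmd\tau}=(T-t)\frac{\rmd}{\rmd t}$.

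The failure is not only a high-dimensional phenomenon. For $n=1$ ($\calS^0=\{\pm1\}$) the sliced controller is the displacement interpolation. Its energy is $W_2(\rho_0,\rho_f)^2/T$, which matches $\tfrac12 SW_2^2=\tfrac12 W_2^2$ only if $T=2$.

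More generally, replacing $(t,T)$ by $(\alpha t,\alpha T)$ maps closed-loop trajectories to closed-loop trajectories and multiplies $\int_0^T\|u\|^2\,\rmd t$ by $1/\alpha$, while leaving $SW_2$ unchanged. Hence no $T$-independent identity of the stated form can hold, and retuning the numerical constant in $\lambda$ (your $n/(T-t)$) cannot rescue $\tfrac12 SW_2^2$. A correct statement must carry a factor $1/T$. Moreover, when $m_0\neq m_f$, finiteness already requires from the mean part alone a gain $\lambda=c/(T-t)$ with $c>n/2$.
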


\section{Numerical Experiments}\label{sec:num}

We present a comparison between the proposed density controller and the ideal one.
The initial and target densities are given by $ \calN(m_0,\Sigma_0) $ and $ \calN(m_f,\Sigma_f) $, respectively, where
\begin{align*}
m_0 &=
\begin{bmatrix}
 -2\\
 2
\end{bmatrix},
&&\Sigma_0 = \begin{bmatrix}
1 & 0.2 \\ 0.2 & 0.5
\end{bmatrix},\\
m_f &=
\begin{bmatrix}
-8\\
4
\end{bmatrix}, 
&&\Sigma_f = \begin{bmatrix}
0.1 & 0 \\ 0 & 0.04
\end{bmatrix}.
\end{align*}
Fig.~\ref{fig:iterative} shows sample paths of $ x(t) $ under the iterative sliced controller \eqref{eq:slice_controller} with different numbers of discrete time steps $ T_d = T/h = 100, 1000, 100000 $. 
Slicing directions are sampled uniformly on $\calS^{n-1}$.
Note that $ T_d $ equals the number of slices over time horizon $T$.
Even for small $ T_d $, the state density $ \rho $ is steered close to the target $ \rho_f $, and as $ T_d $ increases, the terminal density $ \rho(T,\cdot) $ becomes closer to the target $ \rho_f $.

\begin{figure}[t]
	\begin{minipage}[b]{1.0\linewidth}
		\centering
		\includegraphics[keepaspectratio, scale=0.43]
		{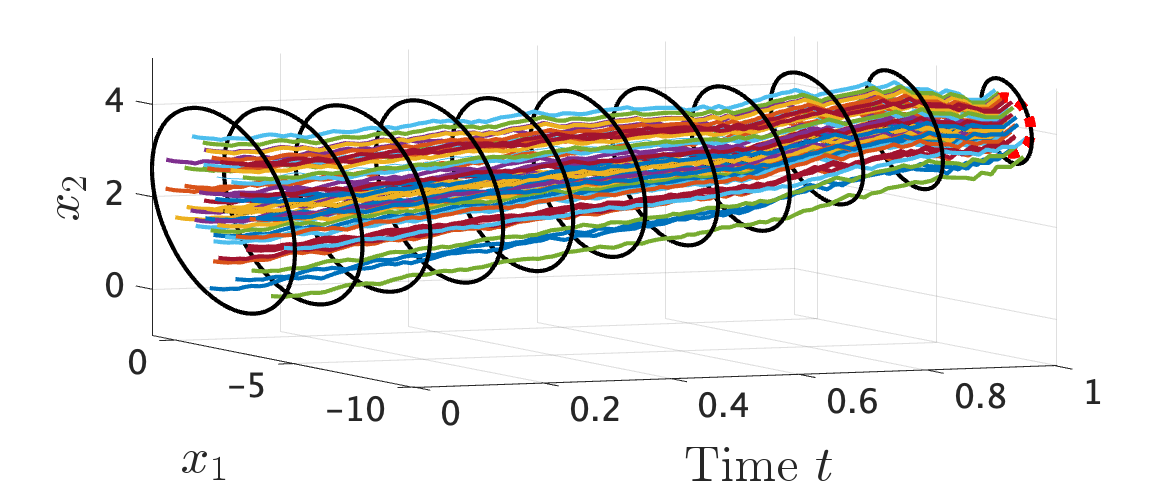}
		\subcaption{$ T_d = 100 $.}\label{fig:opt_state_Q0}
	\end{minipage}
	\begin{minipage}[b]{1.0\linewidth}
		\centering
		\includegraphics[keepaspectratio, scale=0.43]
		{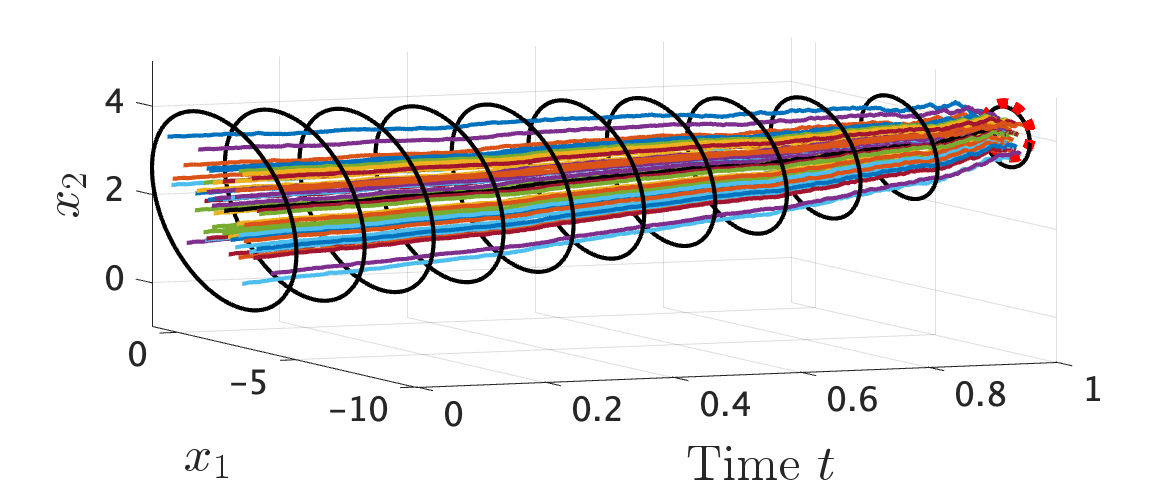}
		\subcaption{$ T_d = 1000 $.}
	\end{minipage}
    \begin{minipage}[b]{1.0\linewidth}
		\centering
		\includegraphics[keepaspectratio, scale=0.43]
		{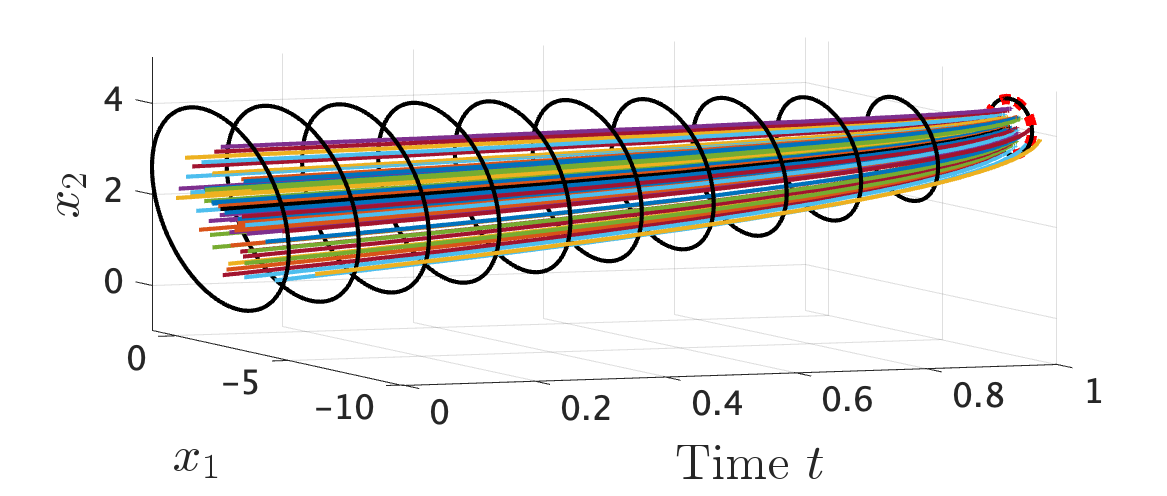}
		\subcaption{$T_d = 100000$.}
	\end{minipage}
	\caption{$ 50 $ samples of $ x(t) = [x_1(t) \ x_2(t) ]^\top $ under the iterative sliced controller \eqref{eq:slice_controller} with different $ T_d = T/h = 100, 1000, 100000 $ (colored lines). The black and red ellipses are the $ 3\sigma $ ellipses of $ \Sigma(t) $ and $ \Sigma_f $, respectively.}\label{fig:iterative}
\end{figure}

Next, Fig.~\ref{fig:compare} compares the iterative sliced controller~\eqref{eq:slice_controller}, the ideal sliced controller~\eqref{eq:optimal_controller_single_integ}, and the minimum energy controller~\eqref{eq:minimum_energy}. It can be seen that the evolution of $ \Sigma(t) $ under the iterative sliced controller with large $ T_d $ is almost identical to that under the ideal sliced controller.
Unlike the minimum energy controller, under which the state paths exhibit straight lines, the sliced controller results in curved trajectories of the state process.

\begin{figure}[t]
	\begin{minipage}[b]{1.0\linewidth}
		\centering
		\includegraphics[keepaspectratio, scale=0.43]
		{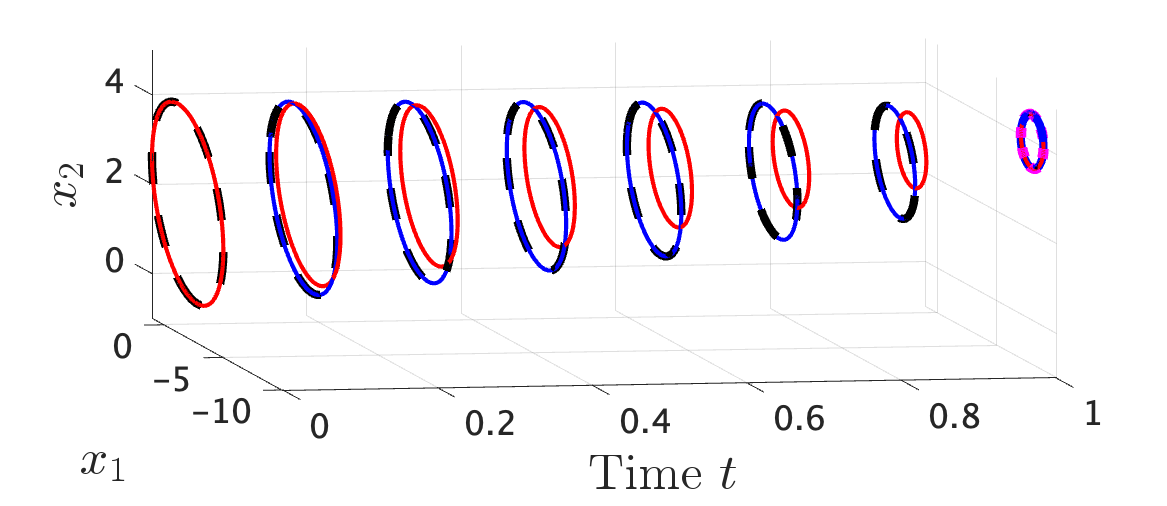}
		\subcaption{$ T_d = 100 $.}\label{fig:compare_density}
	\end{minipage}
    \begin{minipage}[b]{1.0\linewidth}
		\centering
		\includegraphics[keepaspectratio, scale=0.43]
		{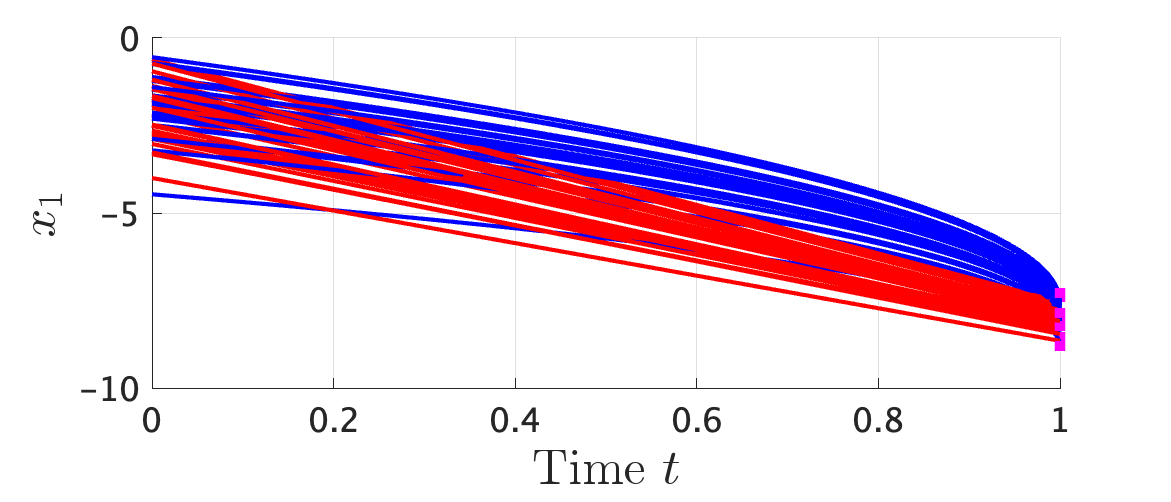}
		\subcaption{$ T_d = 100 $.}\label{fig:compare_path}
	\end{minipage}
	\caption{\subref{fig:compare_density} Evolution of the $ 3\sigma $ ellipse of the covariance $ \Sigma(t) $ under the iterative sliced controller \eqref{eq:slice_controller} with $ T_d = 100000 $ (black, dashed), the ideal sliced controller~\eqref{eq:optimal_controller_single_integ} (blue), and the minimum energy controller~\eqref{eq:minimum_energy} (red). The magenta dashed ellipse shows the $ 3\sigma $ ellipse of $ \Sigma_f $. \subref{fig:compare_path} $ 30 $ samples of $ x_1(t) $ under the ideal sliced controller (in blue) and the minimum energy controller (in red).}\label{fig:compare}
\end{figure}

\section{Concluding Remarks}\label{sec:conclusion}
We developed a sliced optimal transport feedback controller for ensemble steering through one–dimensional projections. In continuous time, the feedback decreases $SW_2^2$ monotonically with gain $(T-t)^{-1}$. For Gaussian marginals, it is affine, preserves Gaussianity, yields closed equations for the mean and covariance, and reaches the target at the terminal time with expected energy equal to one-half of the squared sliced distance. A discrete, direction-by-direction implementation based on one-dimensional maps aligns numerically with the ideal law and produces trajectories that differ qualitatively from the minimum-energy displacement interpolation.

This viewpoint suggests enrich directions while avoiding high-dimensional couplings: (i) output-distribution steering under partial observations, by enforcing families of one-dimensional transports along informative projections so that computation scales with samples and directions; (ii) extensions to general linear plants, posing sliced objectives in the output space and designing feedback consistent with reachability and observability; (iii) uncertainty-aware feedback that reduces not only mean error but also a transport discrepancy to concentrate ensembles in safe regions under disturbances; and (iv) methods beyond Gaussians, including projected--quantile controllers, adaptive or quasi-Monte Carlo direction sampling, and online MPC-alike updates with principled step-size selection. Taken together, operating directly in the sensing/actuation space offers a tractable and interpretable route to distribution control.

\bibliographystyle{ieeepes}
\bibliography{ecc2026_arxiv_slicedOT}

%%%%%%%%%%%%%%%%%%%%%%%%%%%%%%%%%%%%%%%%%%%%%%%%%%%%%%%%%%%%%%%%%%%%%%%%%%%%%%%%
\appendices

\section{Proof of Proposition~\ref{prop:time_derivative}}\label{app:time_derivative}
First, we derive the evolution equation for the sliced density $ \rho_\theta $.
For any test function $ \varphi \in C_c^\infty (\bbR) $, we have
\begin{align}
    \int_{\bbR} \varphi(y) \partial_t \rho_\theta (t,y) \rmd y &= \int_{\bbR^n} \varphi(\theta^\top x) \partial_t \rho(t,x) \rmd x  \nonumber\\
    &= - \int_{\bbR^n} \varphi(\theta^\top x) \nabla_x \cdot (\rho v) \rmd x, \label{eq:test_1}
\end{align}
where we used \eqref{eq:continuity_eq}.
Let $ x_{-i} := \{x_j\}_{j=1}^n \setminus \{x_i\} $.
Then, by the assumption that $ \rho(t,x) v_i (t,x) \rightarrow 0 $ as $ |x_i| \rightarrow \infty $, the right-hand side can be written as
\begin{align}
    &-\sum_{i=1}^n \int_{\bbR^n} \varphi(\theta^\top x) \partial_{x_i} [\rho v_i] \rmd x_i \rmd x_{-i} \nonumber\\
    &= - \sum_i \int_{\bbR^{n-1}}   \biggl( \left[ \varphi(\theta^\top x) \rho v_i  \right]_{x_i = -\infty}^{\infty} \nonumber\\
    &\qquad\qquad - \int_{\bbR} \partial_{x_i} [\varphi(\theta^\top x)] \rho v_i  \rmd x_i \biggr) \rmd x_{-i} \nonumber\\
    &= \int_{\bbR^n} \left(\nabla_x \varphi(\theta^\top x) \right)^\top \rho v \rmd x \nonumber\\
    &= \int_{\bbR^n}  \varphi'(\theta^\top x)  \theta^\top v \rho \rmd x = \int_{\bbR} \varphi'(y) \Phi_\theta( t,y) \rmd y , \label{eq:test_2}
\end{align}
where $ \varphi' $ is the derivative of $ \varphi $, $ \Phi_\theta (t,y) := \int_{\bbR^n} \theta^\top v(t,x) \rho(t,x) \delta(y - \theta^\top x) \rmd x $, and $ \delta $ denotes the Dirac delta function.

By integrating by parts and noting that $ \varphi  $ has a compact support, it follows from \eqref{eq:test_1} and \eqref{eq:test_2} that
\begin{equation}
    \int_{\bbR} \varphi(y) \partial_t \rho_\theta (t,y) \rmd y = - \int_{\bbR} \varphi(y) \partial_y \Phi_\theta(t,y) \rmd y .
\end{equation}
Since the above equation holds for any $ \varphi \in C_c^\infty (\bbR) $, by \cite[Remark~3.7]{Duistermaat2010}, the following equation holds for all $ t \in (0,T) $ and almost all $ y \in \bbR $:
\begin{equation}\label{eq:sliced_continuity_1}
    \partial_t \rho_\theta (t,y) = - \partial_y \Phi_\theta (t,y)  .
\end{equation}
Now, define
\begin{align*}
    v_\theta (t,y) := \frac{\Phi_\theta (t,y)}{\rho_\theta (t,y)} = \bbE\! \left[\theta^\top v(t,x(t)) \mid \theta^\top x(t) = y \right] ,  \nonumber & \\
    (t,y) \in [0,T] \times \bbR .&
\end{align*}
Then, \eqref{eq:sliced_continuity_1} can be written as
\begin{equation}\label{eq:slice_continuity}
    \partial_t \rho_\theta (t,y) = - \partial_y \! \left[ \rho_\theta (t,y) v_\theta (t,y) \right] ,
\end{equation}
which is the continuity equation for $ \rho_\theta $ under the velocity field $ v_\theta $ along the direction $ \theta $.

By \cite[Theorem~8.4.7]{Ambrosio2005} and the sliced continuity equation~\eqref{eq:slice_continuity}, for almost all $ t \in (0,T) $,
\begin{align}
    \partial_t W_2^2 (\rho_\theta (t,\cdot), \rho_{f,\theta}) &= 2 \int_{\bbR} \left(y_1 - y_2 \right) v_\theta (t,y_1)  \gamma^* (\rmd y_1, \rmd y_2), \nonumber
\end{align}
where $ \gamma^* $ is an optimal coupling that minimizes the right-hand side of \eqref{def:wasserstein} with $ \mu(\rmd y_1) = \rho_\theta(t,y_1) \rmd y_1 $ and $ \nu(\rmd y_1) = \rho_{f,\theta} (y_2) \rmd y_2 $.
Moreover, by \cite[Theorem~2.44]{Villani2003}, $ \gamma^* $ takes the form $ \gamma^* (\rmd y_1, \rmd y_2) = \rho_\theta(t,y_1) \delta(y_2 - \calT_t^\theta (y_1)) \rmd y_1 \rmd y_2 $, where $ \calT_t^\theta : \bbR \rightarrow \bbR $ is the unique optimal transport map from $ \rho_\theta (t,\cdot) $ to $ \rho_{f,\theta} $. Therefore, we obtain
\begin{align}
    \partial_t W_2^2 (\rho_\theta (t,\cdot), \rho_{f,\theta}) &= 2 \int_{\bbR} \left(y - \calT_t^\theta (y) \right) v_\theta (t,y)  \rho_\theta (t,y) \rmd y \nonumber .
\end{align}
By combining this with the expression:
\begin{equation}
    \rho_\theta (t,y) = \int_{\bbR^n} \delta (y - \theta^\top x) \rho(t,x) \rmd x , ~~ (t,y) \in \bbR_{\ge 0} \times \bbR , \nonumber
\end{equation} 
we have that for almost all $ t \in (0,T) $,
\begin{align}
    &\partial_t W_2^2 (\rho_\theta (t,\cdot), \rho_{f,\theta}) \nonumber\\
    &= 2 \int_{\bbR^n} \left(\theta^\top x - \calT_t^\theta (\theta^\top x) \right) \theta^\top v(t,x) \rho(t,x) \rmd x . \label{eq:W2_dt}
\end{align}
By substituting this into \eqref{eq:interchange}, we obtain the desired result.

\section{Proof of Proposition~\ref{prop:linearlity_Gaussian}}\label{app:linearity_Gaussian}
Let $ K^*(t) := K(t,\Sigma(t)) $ and $ \eta^*(t) := \eta(t,\Sigma(t)) $. Then, we show that the solution $ \wtilde{x} $ of $ \dot{\wtilde{x}}(t) = K^*(t) \wtilde{x} (t) + \eta^*(t) $ with $ \wtilde{x}(0) = x(0) $ coincides with that of $ \dot{x} (t) = v(t,x(t)) $, where $ v $ is given by \eqref{eq:optimal_controller_single_integ}.
It is well-known that under the linearity of the system of $ \wtilde{x} $ and the Gaussianity of $ \wtilde{x}(0) $, the solution $ \wtilde{x}(t) $ follows $ \calN(\wtilde{m}(t),\Sigma(t)) $ for any $ t \in [0,T) $, where $ \wtilde{m}(t):= \bbE[\wtilde{x}(t)] $.
In addition, we have $ \dot{\wtilde{m}}(t) = K^*(t) \wtilde{m}(t) + \eta^*(t) $, $ \wtilde{m} (0) = m_0 $, and
\begin{align}
    \dot{m} (t) &= - \frac{\lambda(t)}{n} (m(t) - m_f) = K^* (t) m(t) + \eta^* (t) , \label{eq:m_ode} \\
    m(0) &= m_0 . \nonumber
\end{align}
Since \eqref{eq:m_ode} admits a unique solution on $ [0,T) $, we have $ \wtilde{m}(t) = m(t) $ for any $ t \in [0,T) $.

Denote by $ \wtilde{\rho}_\theta (t,\cdot) $ the sliced density function of $ \wtilde{x}(t) \sim \calN(m(t),\Sigma(t)) $ along the direction $ \theta $, and let $ \wtilde{\calT}_t^\theta  $ be the unique optimal transport map from $ \wtilde{\rho}_{\theta} (t,\cdot) $ to $ \rho_{f,\theta} $.
Then, by the same argument as for \eqref{eq:initial_feedback}, we have that for any $ (t,x) \in [0,T) \times \bbR^n $,
\begin{align}
\wtilde{v}(t,x)  &:= - \lambda(t) \int_{\calS^{n-1}} \left(\theta^\top x - \wtilde{\calT}_t^\theta (\theta^\top x) \right) \theta \sigma (\rmd \theta) \nonumber\\
&= K^*(t) x + \eta^* (t) . \nonumber
\end{align}

Therefore, the solution $ \wtilde{x} $ of $ \dot{\wtilde{x}} (t) = K^*(t) \wtilde{x} (t) + \eta^* (t) = \wtilde{v} (t,\wtilde{x}(t)) $ with $ \wtilde{x}(0) = x(0) $ also solves $ \dot{x} (t) = v(t,x(t)) $. By the uniqueness assumption of the solution $ x $, we obtain the desired result.

\section{Proof of Theorem~\ref{thm:convergence}}\label{app:convergence}
Let $ \bar{K}(t) := K(t,\Sigma(t))/\lambda(t) $ and $ \bar{\eta}(t) := \eta(t,\Sigma(t)) / \lambda(t) $.
Note that by the linearity of $ v(t,x) $ in $ x $ and the Gaussianity of $ \rho(t,x) $ (see Proposition~\ref{prop:linearlity_Gaussian}), the assumption in Proposition~\ref{prop:time_derivative} that $ \lim_{|x_i|\rightarrow \infty} \rho (t,x) v_i (t,x) = 0 $ is satisfied.
Then, by Proposition~\ref{prop:time_derivative}, we have
\begin{align}
    &\frac{\rmd }{\rmd t} SW_2 (\rho(t,\cdot),\rho_f)^2 \nonumber\\
    &= -2\lambda(t) \int_{\bbR^n} \left\| \bar{K}(t) x  + \bar{\eta}(t) \right\|^2 \rho(t,x) \rmd x \nonumber\\
    &= -2\lambda(t) \bbE\! \left[ \| \bar{K}(t) x(t) + \bar{\eta}(t) \|^2 \right] . \nonumber
\end{align}
By the change of variables $ \tau = -\log (T-t) $, we obtain
\begin{align}
    \frac{\rmd }{\rmd \tau} SW_2(\rho(t,\cdot),\rho_f)^2 &= \frac{\rmd t}{\rmd \tau} \frac{\rmd }{\rmd t} SW_2 (\rho(t,\cdot),\rho_f)^2 \nonumber\\
    &= (T-t) \frac{\rmd }{\rmd t} SW_2(\rho(t,\cdot),\rho_f)^2 \nonumber\\
    &= -2 \bbE \! \left[ \| \bar{K}(t) x(t) + \bar{\eta}(t) \|^2 \right] .  \label{eq:dSW_dtau} 
\end{align}
In addition, we have
\begin{align}
    &\bbE\! \left[ \| \bar{K}(t) x(t) + \bar{\eta}(t) \|^2 \right] \nonumber\\
    &=\bbE\! \left[ \Big\| \bar{K}(t) (x(t) - m(t)) - \frac{1}{n} (m(t) - m_f) \Big\|^2 \right] \nonumber\\
    &= \bbE\! \left[ \left\| \bar{K}(t) (x(t) - m(t)) \right\|^2 \right] + \frac{1}{n^2} \left\| m(t) - m_f \right\|^2  , \label{eq:decompose}
\end{align}
where we used $ \bbE[x(t) - m(t)] = 0 $.
Since $ SW_2 (\rho(t,\cdot),\rho_f)^2 $ is lower bounded and $ \frac{\rmd }{\rmd \tau} SW_2 (\rho(t,\cdot),\rho_f)^2 \le 0 $ for almost all $ \tau \in (-\log T,\infty) $, it holds that 
\[
\lim_{\tau \rightarrow \infty} \frac{\rmd }{\rmd \tau} SW_2^2(\rho(t,\cdot),\rho_f) = 0 .
\]
By combining this with \eqref{eq:dSW_dtau} and \eqref{eq:decompose}, we obtain 
\begin{align}
    \lim_{\tau \rightarrow \infty} \bbE\! \left[ \left\| \bar{K}(t) (x(t) - m(t)) \right\|^2 \right] &= 0 , \label{eq:converge_zero1}\\
    \lim_{\tau \rightarrow \infty}  \left\| m(t) - m_f \right\|^2 &= 0,
\end{align}
where the second equation means $ \lim_{t\nearrow T} m(t) = m_f $.

For the left-hand side of \eqref{eq:converge_zero1}, we have
\begin{align}
    \bbE\! \left[ \left\| \bar{K}(t) (x(t) - m(t)) \right\|^2 \right] &= \trace \! \left( \bar{K} (t) \Sigma(t) \bar{K} (t)^\top \right) \nonumber\\
    &\ge \lambda_{\min} (\Sigma(t)) \| \bar{K} (t) \|_{\rm F}^2 ,\nonumber
\end{align}
where $ \| \cdot \|_{\rm F} $ denotes the Frobenius norm and the minimum eigenvalue $ \lambda_{\min} (\Sigma(t)) $ of $ \Sigma(t) $ is positive for any $ t \in [0,T] $ by assumption. Thus, it follows from \eqref{eq:converge_zero1} that $ \lim_{t\nearrow T} \| \bar{K} (t) \|_{\rm F}^2 = 0 $. In other words,
\begin{align*}
\lim_{t\nearrow T} \int_{\calS^{n-1}} \sqrt{\frac{\theta^\top \Sigma_f \theta}{\theta^\top \Sigma(t) \theta}} \theta \theta^\top \sigma(\rmd \theta) - \frac{1}{n} I = 0.
\end{align*}
Lastly, by Lemma~\ref{lem:harmonic} in Appendix~\ref{app:harmonic}, we obtain $ \lim_{t\nearrow T} \Sigma(t) = \Sigma_f $, which completes the proof.

\section{Proof of Theorem~\ref{thm:energy}}\label{app:energy}
Let $ \bar{K}(t) := K(t,\Sigma(t))/\lambda(t) $ and $ \bar{\eta}(t) := \eta(t,\Sigma(t)) / \lambda(t) $.
Then, we have
\begin{align*}
    \bbE\! \left[\int_0^T \| u(t) \|^2 \rmd t \right] &= \int_0^T \lambda(t)^2 \bbE\! \left[ \| \bar{K}(t) x(t) + \bar{\eta}(t) \|^2 \right] \rmd t \\
    &= -\frac{1}{2} \int_0^T \frac{1}{T-t} \frac{\rmd }{\rmd t} SW_2 (\rho(t,\cdot), \rho_f)^2 \rmd t.
\end{align*}
Let $ f(t) := SW_2 (\rho(t,\cdot), \rho_f)^2 $. By the change of variables $ \tau = -\log (T-t) $, we get
\begin{align*}
    \bbE\! \left[\int_0^T \| u(t) \|^2 \rmd t \right] &= -\frac{1}{2} \int_{\tau(0)}^\infty \frac{1}{T-t}  \frac{\rmd f(t(\tau))}{\rmd \tau} (T-t) \rmd \tau \\
    &= -\frac{1}{2} \int_{\tau(0)}^\infty \frac{\rmd f(T-\Ee^{-\tau})}{\rmd \tau} \rmd \tau \\
    &= \frac{1}{2}\left(f(0) - \lim_{t\nearrow T} f(t) \right).
\end{align*}
By Theorem~\ref{thm:convergence}, we have $ \lim_{t\nearrow T} f(t) = 0 $. Therefore, we obtain the desired result.

\section{}\label{app:harmonic}
\begin{lemma}\label{lem:harmonic}
Let $ \Sigma \succ 0 $, $\Sigma_f \succeq 0$, $ m \in \bbR^n $, and assume that the following equation holds.
\begin{align}\label{eq:coeff1_zero2}
    \left(\int_{\calS^{n-1}} \sqrt{\frac{\theta^\top \Sigma_f \theta}{\theta^\top \Sigma \theta}} \theta \theta^\top \sigma(\rmd \theta) \right) - \frac{1}{n} I &= 0 . 
\end{align}
Then, we have $ \Sigma = \Sigma_f $.
\end{lemma}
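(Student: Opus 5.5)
The plan is to test the matrix identity \eqref{eq:coeff1_zero2} against a well-chosen symmetric matrix, so that it becomes a scalar integral identity whose integrand has a definite sign.

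Throughout, write $a(\theta):=\theta^\top\Sigma\theta$ and $b(\theta):=\theta^\top\Sigma_f\theta$. Since $\Sigma\succ0$, we have $a(\theta)>0$ on $\calS^{n-1}$. Since $\Sigma_f\succeq 0$, we have $b(\theta)\ge 0$. Set $r(\theta)=\sqrt{b(\theta)/a(\theta)}$, which is continuous on the sphere.

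First, I will record the elementary fact $\int_{\calS^{n-1}}\theta\theta^\top\sigma(\rmd\theta)=\frac1n I$. This follows from rotation invariance of $\sigma$: the integral commutes with every orthogonal matrix, so it is a multiple of $I$, and its trace is $1$. With this, \eqref{eq:coeff1_zero2} can be rewritten as
\begin{equation*}
\int_{\calS^{n-1}}\bigl(r(\theta)-1\bigr)\theta\theta^\top\sigma(\rmd\theta)=0 .
\end{equation*}
Taking the trace inner product with an arbitrary symmetric matrix $M$ then gives
\begin{equation*}
\int_{\calS^{n-1}}\bigl(r(\theta)-1\bigr)\,\theta^\top M\theta\,\sigma(\rmd\theta)=0
\end{equation*}
for every symmetric $M$.

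The key step is the choice $M=\Sigma_f-\Sigma$, so that $\theta^\top M\theta=b-a$. The integrand then factors as
\begin{equation*}
\Bigl(\sqrt{\tfrac{b}{a}}-1\Bigr)(b-a)=\frac{(\sqrt b-\sqrt a)(\sqrt b-\sqrt a)(\sqrt b+\sqrt a)}{\sqrt a}=\frac{(\sqrt b-\sqrt a)^2(\sqrt b+\sqrt a)}{\sqrt a}\ \ge 0 .
\end{equation*}
So a nonnegative continuous function has zero integral against $\sigma$. Because $\sigma$ has full support on the sphere, the integrand must vanish identically.

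Since $\sqrt a+\sqrt b\ge\sqrt a>0$, the vanishing of the integrand forces $\sqrt{b(\theta)}=\sqrt{a(\theta)}$, i.e. $\theta^\top\Sigma_f\theta=\theta^\top\Sigma\theta$ for all $\theta\in\calS^{n-1}$. By homogeneity this extends to all of $\bbR^n$. By polarization, two symmetric matrices with identical quadratic forms coincide, so $\Sigma=\Sigma_f$.

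The only genuinely nontrivial step is finding the test matrix $M=\Sigma_f-\Sigma$. Natural first attempts, such as testing against $\Sigma$ and applying AM--GM, yield only one-sided inequalities like $\operatorname{tr}\Sigma\le\operatorname{tr}\Sigma_f$. The rest is routine: continuity and full support of $\sigma$ justify passing from "zero integral" to "pointwise zero". The parameter $m$ in the statement plays no role in the argument.
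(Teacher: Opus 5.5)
Your proof is correct and is essentially the paper's argument: the paper traces the identity against $\Sigma$ and against $\Sigma_f$ separately and then combines the two resulting scalar identities, which amounts to your single test matrix $M=\Sigma_f-\Sigma$. This leads to the same nonnegative integrand $(\sqrt{g}-\sqrt{d})^2(\sqrt{g}+\sqrt{d})/\sqrt{d}$; your appeal to continuity and full support of $\sigma$, and to polarization, supplies steps the paper leaves implicit.
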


\begin{proof} 
Define $ d(\theta) := \theta^\top \Sigma \theta $ and $ g(\theta) := \theta^\top \Sigma_f \theta $.
Multiplying both sides of \eqref{eq:coeff1_zero2} by $ \Sigma $ and taking the trace, we have
\begin{equation}
    \int_{\calS^{n-1}} \sqrt{d(\theta) g(\theta)} \sigma(\rmd \theta) = \int_{\calS^{n-1}} d(\theta)  \sigma(\rmd \theta) , \label{eq:g_d_cond1}
\end{equation}
where we used $ \int_{\calS^{n-1}} d(\theta) \rmd \sigma = \trace(\Sigma) / n $. Similarly, multiplying both sides of \eqref{eq:coeff1_zero2} by $ \Sigma_f $, we obtain
\begin{equation}
    \int_{\calS^{n-1}} \frac{g(\theta)^{3/2}}{d(\theta)^{1/2}} \sigma(\rmd \theta)  = \int_{\calS^{n-1}} g(\theta)  \sigma(\rmd \theta) .\label{eq:g_d_cond2}
\end{equation}

Let us consider the following integral:
\begin{align*}
&\int_{\calS^{n-1}} \frac{(\sqrt{g} - \sqrt{d})^2 (\sqrt{g} + \sqrt{d})}{\sqrt{d}} \rmd \sigma \nonumber\\
&= \int \frac{g^{3/2}}{d^{1/2}} \rmd \sigma - \int g \rmd \sigma - \int \sqrt{dg} \rmd \sigma + \int d \rmd \sigma .
\end{align*}
By \eqref{eq:g_d_cond1} and \eqref{eq:g_d_cond2}, the above integral equals zero. Moreover, since $ (\sqrt{g} + \sqrt{d}) / \sqrt{d} > 0 $, it follows that $ d(\theta) = g(\theta) $ for any $ \theta \in \calS^{n-1} $, which means $ \Sigma = \Sigma_f $.
\end{proof}

\end{document}